\documentclass[twoside,12pt]{article}
\usepackage[T1]{fontenc}
\usepackage[utf8]{inputenc}
\usepackage{lmodern}
\usepackage{mathrsfs}
\usepackage{amssymb, amsmath, mathtools, amsthm}
\usepackage{graphicx}
\usepackage{color}
\usepackage{float, caption}
\usepackage{enumitem}
\usepackage{booktabs}
\usepackage{array}
\usepackage{microtype}
\usepackage[top=2cm, bottom=2cm, left=2cm, right=2cm]{geometry}
\usepackage[colorlinks=true,linkcolor=blue,citecolor=blue,urlcolor=blue]{hyperref}

\IfFileExists{epsf.tex}{\input{epsf}}{}

\newcommand{\bd}{\begin{description}}
\newcommand{\ed}{\end{description}}
\newcommand{\bi}{\begin{itemize}}
\newcommand{\ei}{\end{itemize}}
\newcommand{\be}{\begin{enumerate}}
\newcommand{\ee}{\end{enumerate}}
\newcommand{\beq}{\begin{equation}}
\newcommand{\eeq}{\end{equation}}
\newcommand{\beqs}{\begin{eqnarray*}}
\newcommand{\eeqs}{\end{eqnarray*}}

\definecolor{DarkGreen}{rgb}{0.2, 0.6, 0.3}

\numberwithin{equation}{section}
\allowdisplaybreaks

\newtheorem{theorem}{Theorem}[section]

\newtheorem{lemma}[theorem]{Lemma}

\newtheorem{definition}[theorem]{Definition}

\newtheorem{proposition}[theorem]{Proposition}

\newcommand{\N}{\mathbb{N}}
\newcommand{\Z}{\mathbb{Z}}
\newcommand{\E}{\mathbb{E}}
\newcommand{\Prob}{\mathbb{P}}
\newcommand{\eps}{\varepsilon}
\newcolumntype{P}[1]{>{\raggedright\arraybackslash}p{#1}}

\hypersetup{
  pdftitle={Asymptotically optimal packing of fixed-cardinality arithmetic progressions},
  pdfauthor={Anonymous author},
  pdfsubject={Translated arithmetic-progression packing and rainbow matchings},
  pdfkeywords={arithmetic progression; translated-set packing; rainbow matching; random partition; LYM inequality}
}

\begin{document}

\title{\textbf{Solution to a conjecture of Alon, D\k{e}bski, Grytczuk and Przyby\l{}o on
fixed-cardinality arithmetic progressions\footnote{Supported by the National Science Foundation of China
(Nos. 12471329, 12401461 and 12061059).}}}
\author{Yaping Mao\footnote{Corresponding author: Faculty of Environment and Information
Sciences, Yokohama National University, 79-2 Tokiwadai, Hodogaya-ku,
Yokohama 240-8501, Japan. {Email: \tt maoyaping@ymail.com}}, \ \ Zhao
Wang\footnote{College of Science, China Jiliang University, Hangzhou
310018, China. {\tt
wangzhao@mail.bnu.edu.cn}}, \ \ 
Meiqin Wei\footnote{School of Science, Shanghai Maritime University, Shanghai 201306, China. {\tt weimeiqin8912@163.com}}, \ \ 
Gang Yang\footnote{Corresponding author. Faculty of Environment and Information Sciences, Yokohama National University, 79-2 Tokiwadai, Hodogaya-ku, Yokohama 240-8501, Japan. {\tt gangyang98@outlook.com}}}
\date{}
\maketitle

\begin{abstract}
Fix a positive integer $n$, and put $B_d=\{d,2d,\ldots,nd\}$.  Let $M_k(n)$
be the least integer $m$ for which one translate of each of
$B_1,\ldots,B_k$ can be placed pairwise disjointly in $[m]$.  We prove that,
for every $\eps\in(0,1)$ and all sufficiently large $k$, one has
$M_k(n)\le n\lceil(1+\eps)k\rceil$.  Since the trivial counting bound gives
$M_k(n)\ge nk$, it follows that $M_k(n)=(1+o(1))nk$ for every fixed $n$.
This confirms a conjecture of Alon, D\k{e}bski, Grytczuk and Przyby\l{}o on
prescribed-difference packings of fixed-cardinality arithmetic progressions.
\vspace{1mm}

\noindent {\bf Keywords:} Arithmetic progression; Translated-set packing; Rainbow matching; Random partition; LYM inequality; Hypergraph.

\noindent {\bf AMS subject classification 2020:} 05D15; 05C65; 11B25.
\end{abstract}

\section{Introduction}

We write $[m]=\{1,2,\ldots,m\}$ for a positive integer $m$, and
$[a,b]=\{a,a+1,\ldots,b\}$ for integers $a\le b$.  In this paper the length
of an integer interval is the number of integer points it contains.  If
$A\subseteq\Z$ and $t\in\Z$, then $t+A=\{t+a:a\in A\}$ is called a translate
of $A$.

\begin{definition}
Let $\mathcal F=\{A_1,\ldots,A_s\}$ be a finite family of finite subsets of
$\Z$.  We say that $\mathcal F$ can be packed into $[m]$ if there are integers
$t_1,\ldots,t_s$ such that $t_i+A_i\subseteq[m]$ for every $i$ and the sets
$t_1+A_1,\ldots,t_s+A_s$ are pairwise disjoint.  The least such integer $m$ is
denoted by $m(\mathcal F)$.
\end{definition}

The problem belongs to a natural circle of questions about packing translated
copies of prescribed finite sets.  The additional arithmetic feature here is
that the sets are initial pieces of arithmetic progressions, and their common
differences are prescribed rather than chosen freely.  Thus one has to place
many sets whose shapes become longer and more rigid as the difference grows.
This is the source of the main difficulty.

Alon, D\k{e}bski, Grytczuk and Przyby\l{}o \cite{ADGP} initiated a systematic
study of this parameter for the progressions $D_d=\{d,2d,3d,\ldots\}$,
where $d\in\N$.  They considered two related models.  In the bounded-diameter
model one takes $A_d=D_d\cap[n]=\{d,2d,\ldots,\lfloor n/d\rfloor d\}$; all
sets then lie inside $[n]$, but their sizes depend on $d$.  In the
equal-cardinality model one takes $B_d=D_d\cap[nd]=\{d,2d,\ldots,nd\}$; all
sets then have exactly $n$ elements, but the diameter of $B_d$ grows with
$d$.

For $D\subseteq\N$, let $\mathcal F_D(n)=\{A_d:d\in D\}$ and
$\mathcal G_D(n)=\{B_d:d\in D\}$, and put $m_D(n)=m(\mathcal F_D(n))$ and
$M_D(n)=m(\mathcal G_D(n))$.  In particular, $m_k(n)=m_{[k]}(n)$,
$m(n)=m_n(n)$, $M_k(n)=M_{[k]}(n)$, and $M(n)=M_n(n)$.  We keep this notation
in order to match \cite{ADGP}.  The rest of the paper concerns only the
quantity $M_k(n)$ with $n$ fixed and $k$ tending to infinity.  Equivalently,
$M_k(n)$ is the least $m$ for which one can choose integers $t_1,\ldots,t_k$
so that $t_d+B_d\subseteq[m]$ for all $d\in[k]$, and these $k$ translated
sets are pairwise disjoint.

The two models have different asymptotic scales.  In the bounded-diameter
model, Alon et al. proved
\[
 \left(\frac43-o(1)\right)\frac{n^{3/2}}{\log n}
 \le m(n)\le
 \left(\frac53+o(1)\right)\frac{n^{3/2}}{\log n}.
\]
In the equal-cardinality model with $k=n$, they proved
\[
 \left(\frac16-o(1)\right)\frac{n^3}{\log n}
 \le M(n)\le
 (0.526+o(1))\frac{n^3}{\log n}.
\]
Their Theorems~4 and~5 also contain estimates in the range $k<n$, but those
estimates are not needed below.  The part of their work relevant here is the
following fixed-$n$ bound.  The lower bound
\begin{equation}\label{eq:lower-bound}
        M_k(n)\ge nk
\end{equation}
is immediate, since every packing contains $k$ disjoint sets of size $n$.
Alon et al. proved that, for every fixed $n$, there is $k_0(n)$ such that
$M_k(n)\le 3nk$ for all $k\ge k_0(n)$.  Their proof first packs almost all
progressions in the cyclic group $\mathbb Z_{nk}$ using the near-perfect
matching machinery of R\"odl, Frankl, Pippenger and Spencer
\cite{FranklRodl,PippengerSpencer,Rodl}.  After the cycle is cut open, the
main part of the packing lies in an ordinary interval of length less than
$2nk$, and the remaining progressions are placed in a second interval of
asymptotic length $nk$.  This explains the factor $3$ in their upper bound and
also shows why removing that extra factor is not just a matter of tightening a
constant.

The fixed-$n$ question was formulated by Alon, D\k{e}bski, Grytczuk and
Przyby\l{}o as the conjecture that $M_k(n)=(1+o(1))nk$ for every fixed
positive integer $n$.  The theorem below proves this statement.

\begin{theorem}\label{thm:main}
For every fixed positive integer $n$ and every $\eps\in(0,1)$, there exists
$k_0=k_0(n,\eps)$ such that $M_k(n)\le n\lceil(1+\eps)k\rceil$ for every
$k\ge k_0$.  Consequently, $M_k(n)=(1+o(1))nk$ as $k\to\infty$.
\end{theorem}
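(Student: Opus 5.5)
The plan is to recast the problem as a rainbow matching problem in an $n$-uniform hypergraph. Then I would produce an almost perfect rainbow matching by a nibble/random-greedy argument, and finish with a cleanup step that uses the $\eps$-slack. Put $L=\lceil(1+\eps)k\rceil$ and $N=nL$. Let $H$ be the hypergraph on $[N]$ whose edges are the sets $t+B_d\subseteq[N]$, each coloured by $d\in[k]$. A packing of $B_1,\ldots,B_k$ into $[N]$ is exactly a rainbow matching of $H$ that uses every colour. To make the colours into vertices, I add a vertex $v_d$ to each edge of colour $d$. This gives an $(n+1)$-uniform hypergraph $H^*$, and I need a matching of $H^*$ covering all $v_d$.

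\textbf{Step 1 (fractional packing).} I would first build weights $w(d,t)\ge0$ with $\sum_t w(d,t)=1$ for every $d$ and vertex load $\sum_{(d,t):x\in t+B_d}w(d,t)\le 1-\eps/3$ for every $x\in[N]$. Uniform weights over the $N-(n-1)d$ admissible translates do not work. The load they produce is about $\frac{n}{n-1}\log\frac{nL}{nL-(n-1)k}$, which exceeds $1$ for small $\eps$, and the excess is concentrated near the ends of $[N]$. Instead I would choose the weights through a random partition. Split $[N]$ into consecutive windows at a scale $\Theta(\delta k)$. Assign differences in dyadic-like ranges to windows, so that a progression of difference $d$ lives on a structured family of positions, one in each of $n$ windows spaced $d$ apart. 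The total demand then has to be checked against the capacity $N$, window by window. I would phrase this bookkeeping as an antichain/LYM-type inequality on the family of window-index patterns used by the different $d$. The LYM inequality should show that the capacity constraints are feasible precisely because $\sum_d n = nk \le N/(1+\eps)$. Keeping every vertex load bounded away from $1$ while respecting the boundary is the step I expect to be the main obstacle.

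\textbf{Step 2 (nibble).} The plan here is to discretise the fractional packing. Sample edges independently with probability proportional to $w$ times a small constant, and iterate, in the style of R\"odl--Frankl--Pippenger--Spencer. Alternatively, I would apply a weighted Pippenger--Spencer/Kahn theorem to $H^*$. The needed codegree condition holds since two vertices of $[N]$ lie in $O(n^2)$ common edges of a given colour, out of $\Theta(k)$ edges per colour. This yields a rainbow matching missing only $\eta k$ colours, for any fixed $\eta>0$ and large $k$. Moreover, the uncovered vertices of $[N]$ should be distributed quasirandomly; using the load slack, every interval of length $\ge \delta k$ keeps at least an $\eps/4$-fraction of uncovered points.

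\textbf{Step 3 (absorption).} The final step places the $\eta k$ leftover progressions greedily, or through a small reserved random set of positions chosen at the start, into the uncovered points. For a remaining $d$, the number of translates $t$ avoiding all occupied points is at least about $N(\eps/4)^n-O(n\eta k\cdot n)$, provided the uncovered set is sufficiently random. To secure this, I would set aside in advance a random subset $R\subseteq[N]$ of density $\eps/4$ that is never used in Step 2. Each greedy placement kills only $O(n^2)$ translates of any other difference, so for $\eta\ll(\eps/4)^n/n^2$ every leftover $d$ can be placed in turn. This gives $M_k(n)\le N=n\lceil(1+\eps)k\rceil$, and with \eqref{eq:lower-bound} the asymptotic $M_k(n)=(1+o(1))nk$.
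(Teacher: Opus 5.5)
\textbf{Gap: Step 1 is the whole difficulty, and it is not done.} Your Steps 2 and 3 are a workable endgame, but everything rests on Step 1. The rest of your argument needs, for every $d\in[k]$, a probability distribution on translates of $B_d$ inside $[N]$ with two properties:
\begin{enumerate}[label=(\alph*)]
\item every point of $[N]$ has total load at most $1-\Omega(\eps)$;
\item no single difference is concentrated at a point, i.e.\ $p_d(x)=\Prob(x\in t+B_d)=o(1)$ uniformly in $d$ and $x$.
\end{enumerate}
Your outline (windows of size $\Theta(\delta k)$, dyadic ranges, an ``LYM-type inequality on window-index patterns'') gives no actual assignment of differences to windows. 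It also gives no verification of the local capacity constraints. The global count $nk\le N/(1+\eps)$ is necessary but, as your own uniform-weight computation shows, far from sufficient locally. (Incidentally, for uniform weights the overload sits in the interior of $[N]$, not near its ends.)

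Property (b) is also missing from your Step 2. In $H^*$ the weighted codegree of $v_d$ and $x$ is exactly $p_d(x)$, so the Pippenger--Spencer/Kahn nibble needs it. You only checked pairs of points of $[N]$, and with unweighted counts.

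\textbf{How the paper gets (a) and (b).}
\begin{itemize}
\item It blows up each point of $[L]$ into $n$ consecutive points. The block $[n(s-1)+1,\,n(s+d-1)]$ is then partitioned exactly by the $d$ translates $\{n(s-1)+a+jd:0\le j<n\}$, $a\in[d]$, of $B_d$.
\item This reduces the problem to one dimension: it suffices to put weights $q_{d,s}$ on length-$d$ intervals of $[L]$ with $\sum_s q_{d,s}=1/d$ and constant load $k/L\le 1/(1+\eps)$.
\item These weights come from a random partition of $L$ into parts at most $k$ with $\E N_d=L/(kd)$. It is obtained from the cycle type of a random permutation of $[L]$ conditioned not to be an $L$-cycle, with cycles longer than $k$ split recursively. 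The parts are laid out in uniformly random order.
\item Every realization is a tiling of $[L]$, so the load is exactly $k/L$ at every point.
\item LYM is used not for feasibility but for (b). A given block starts at a given point with probability at most $1/R$, and $R\ge c_0\log k$ with probability $1-O(k^{-1/4})$. This gives $p_d(x)=O(1/\log k)$.
\end{itemize}

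\textbf{Your finish would then work.} Run the nibble on $[N]\setminus R$. Renormalizing the surviving weights by $(1-\eps/4)^{-n}$ raises point loads, in expectation, only by a factor $(1-\eps/4)^{-1}$. This keeps them below $1$, since $(1-\eps/3)/(1-\eps/4)<1$. Greedy placement into $R$ then succeeds, because each placed progression kills at most $n^2$ translates of any other difference.

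The paper instead samples $\lceil\log^4 k\rceil$ translates per colour from these weights and applies the Delcourt--Postle full rainbow matching theorem directly. Either finish is fine; the missing ingredient in your proposal is the fractional packing itself.
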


Let us place this theorem beside the exact small cases.  For $n=1$, the sets
$B_d$ are singletons, and $M_k(1)=k$.  For $n=2$, a packing is a family of
pairs whose distances are $1,2,\ldots,k$.  A packing in $[2k]$ is precisely a
Skolem sequence of order $k$: each symbol $d\in[k]$ appears twice in a sequence
of length $2k$, with the two occurrences at distance $d$.  If one allows one
unused position, one obtains a hooked Skolem sequence and a packing in
$[2k+1]$.  The classical existence theorems of Skolem and O'Keefe
\cite{OKeefe1961, Skolem1957} give
\[
 M_k(2)=
 \begin{cases}
  2k,   & k\equiv0,1\pmod4,\\
  2k+1, & k\equiv2,3\pmod4.
 \end{cases}
\]
For $n\ge3$, the exact picture is much less clear.  Already for $n=3$ it is
not known in general when an interval of length $3k$ can be partitioned into
three-term arithmetic progressions with distinct differences; asking that the
differences be exactly $1,\ldots,k$ is stronger still.

The proof of Theorem~\ref{thm:main} is built from three ingredients.  First, we
construct a random partition of an integer $L$ into parts of size at most $k$,
with the expected number of parts of size $d$ equal to $L/(kd)$.  This
harmonic-looking identity is important because, after a suitable normalization,
it gives total mass $1/d$ to intervals of length $d$.  Secondly, we order the
parts at random.  The LYM inequality then shows that, for a fixed point and a
fixed size $d$, the chance of seeing a $d$-part there is small.  This gives a
fractional packing with uniform total load and with no single difference too
concentrated at any point.  Finally, an interval of length $d$ is replaced by
$d$ disjoint $n$-term arithmetic progressions of common difference $d$.  After
choosing several possible translates for each difference, the theorem of
Delcourt and Postle on full rainbow matchings selects one translate of every
difference without intersections.

All logarithms are natural.  Unless explicitly stated otherwise, asymptotic
notation is used with $n$ and $\eps$ fixed and $k\to\infty$.  Constants hidden
in $O_n(\cdot)$ may depend on $n$, and constants hidden in
$O_{n,\eps}(\cdot)$ may depend on both $n$ and $\eps$.

\section{Auxiliary tools and harmonic interval weights}

We first recall the hypergraph language used in the final step.  A
multi-hypergraph is allowed to contain the same edge more than once; all
degrees below count such repeated edges with multiplicity.  It is
$r$-bounded if every edge has at most $r$ vertices and $r$-uniform if every
edge has exactly $r$ vertices.  For a multi-hypergraph $H$ and a vertex $v$,
let $\deg_H(v)$ be the number of edges containing $v$, and let $\Delta(H)$ be
the maximum degree.  For distinct vertices $u$ and $v$, the codegree
$\deg_H(u,v)$ is the number of edges containing both.

When the edges are colored, the color-degree $\deg_c(v)$ is the number of
edges of color $c$ containing $v$.  A matching is a family of pairwise disjoint
edges.  A matching is rainbow if no two of its edges have the same color; it is
full rainbow if it contains one edge of every color.

\begin{theorem}[Theorem~2.11 \cite{DP}]\label{thm:DP}
For every integer $r\ge2$ and every real $\beta>0$, there exist a threshold
$D_0$ and a constant $\alpha>0$, depending only on $r$ and $\beta$, with the
following property.  If $D\ge D_0$ and $H$ is an $r$-bounded edge-colored
multi-hypergraph such that $\Delta(H)\le D$, every color appears on at least
$(1+D^{-\alpha})D$ edges, every pair of distinct vertices has codegree at most
$D^{1-\beta}$, and $\deg_c(v)\le D^{1-\beta}$ for every vertex $v$ and color
$c$, then $H$ has a full rainbow matching.
\end{theorem}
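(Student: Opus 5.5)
This statement is quoted from Delcourt and Postle \cite{DP}, and in the paper it is used as a black box. If I had to reprove it, I would first make colors into vertices. For every color $c$ add a new vertex $x_c$, and replace each edge $e$ of color $c$ by $e\cup\{x_c\}$. The result is an $(r+1)$-bounded multi-hypergraph $H'$ with the following properties.
\begin{itemize}
\item Every original vertex has degree at most $D$.
\item Every color vertex $x_c$ has degree at least $(1+D^{-\alpha})D$.
\item A pair consisting of an original vertex $v$ and a color vertex $x_c$ has codegree $\deg_c(v)\le D^{1-\beta}$.
\item A pair of original vertices has codegree at most $D^{1-\beta}$.
\item Two distinct color vertices have codegree $0$.
\end{itemize}
A full rainbow matching of $H$ is then exactly a matching of $H'$ that covers every color vertex. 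So the goal becomes: find a matching saturating a prescribed vertex set $A$ whose vertices have degree slightly larger than the maximum degree elsewhere.

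The main engine would be the R\"odl nibble with codegree control, as in \cite{FranklRodl,PippengerSpencer,Rodl}. The rounds go as follows.
\begin{enumerate}
\item In each round, select every edge independently with probability $\theta/D_i$, where $D_i$ is the current degree bound and $\theta$ is small.
\item Keep the selected edges that meet no other selected edge.
\item Delete all vertices covered by kept edges, together with every edge touching them.
\end{enumerate}
The quantity to track is not a single degree but the ratio between the degree of a surviving color vertex and the maximum degree of surviving ordinary vertices.

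The codegree bound $D^{1-\beta}$ makes these degree changes concentrated, via Talagrand or Kim--Vu type inequalities. Up to error terms of order $D_i^{-\gamma}$, both kinds of degree shrink by the same multiplicative factor. Hence the relative surplus $D^{-\alpha}$ of the color vertices survives, provided $\alpha$ is chosen small compared with the concentration exponent $\gamma=\gamma(r,\beta)$.

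The step I expect to be the main obstacle is finishing the argument so that \emph{every} color is covered, not just almost every color. A color vertex is destroyed only if it is matched, so uncovered colors are precisely the surviving $x_c$. I would stop the nibble once $D_i$ has dropped to a small power of $D$. At that point each remaining color vertex still has degree at least $(1+D^{-\alpha}/2)D_i$ into the leftover hypergraph, while each leftover ordinary vertex lies in at most $D_i$ edges. I would then finish greedily, processing the remaining colors one by one. When a color is processed, the edges already chosen for other colors can destroy only few of its edges: each chosen edge has at most $r$ ordinary vertices, and codegrees are small. So a surviving edge of the current color should always exist, as long as the number of leftover colors meeting its neighbourhood is small relative to its surplus $D^{-\alpha}D_i$.

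Verifying this local sparsity is where the proof could fail, and I would address it first. The approach would be to track, during the nibble, for each color the number of other surviving colors within distance two of it. I would show that this count decays faster than $D_i$, so that the final greedy step never gets stuck. If it does not decay fast enough, the fallback is to apply Hall-type or weighted-greedy arguments to the small leftover structure.
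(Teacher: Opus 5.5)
The paper gives no proof of this statement: it quotes Theorem~2.11 of \cite{DP} and uses it only as a black box. Your reduction to the $(r+1)$-bounded hypergraph $H'$, where the colour vertices $x_c$ must be saturated, is correct and standard. The reproof you sketch, however, has a genuine gap in the finishing step, and the local sparsity you hope to track cannot hold in near-regular instances. On your own accounting, the nibble leaves an instance of the same problem at scale $D_i$ with the same small relative surplus. Edges of matched colours disappear with their colour vertex, so every surviving edge at a surviving ordinary vertex has a surviving colour. Take a near-regular leftover: ordinary degrees about $D_i$, colour classes $E_c$ of size about $(1+\delta)D_i$, and $r$ ordinary vertices per edge. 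Let $N(c)$ be the set of ordinary vertices on edges of colour $c$. Then $|N(c)|\ge r(1+\delta)D_i/\max_v\deg_c(v)$, and these vertices carry about $|N(c)|D_i$ incidences with edges of other colours. Each other colour accounts for at most $r(1+\delta)D_i$ of these incidences. So at least about $D_i/\max_v\deg_c(v)$ other surviving colours meet the edges of $c$. In the linear case $\deg_c(v)\le1$ this is at least about $D_i$, so your tracked count does not decay relative to $D_i$ at all. Your union bound charges each earlier chosen edge with up to $r\max_v\deg_c(v)$ destroyed edges of $c$. It therefore needs fewer than $(1+\delta)D_i/(r\max_v\deg_c(v))$ such colours, which fails since $r\ge2>1+\delta$. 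Nothing else in the argument prevents the earlier choices from hitting every edge of $c$. A Hall-type fallback does not rescue a surplus of $1+o(1)$ either. Degree counting yields the Aharoni--Haxell condition $\nu\bigl(\bigcup_{c\in I}E_c\bigr)>r(|I|-1)$ only when colour degrees exceed ordinary degrees by a factor of about $r^2$.

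The missing idea is that the nibble \emph{amplifies} the surplus, so you run it only until the surplus is a large constant. This is how the semi-random proofs of asymptotic list edge-colouring (Kahn, Molloy--Reed) proceed. Your claim that both kinds of degree shrink by the same factor is not right. Every edge at an ordinary vertex contains a colour vertex, and colour vertices are covered faster. Uniformize the process by padding edges and using equalizing coin flips, so that each ordinary vertex disappears with probability exactly $p$ and each colour vertex with probability at least $(1+\delta_i)p$. Then colour degrees shrink by about $(1-p)^r$ and ordinary degrees by about $(1-(1+\delta_i)p)(1-p)^{r-1}$. This gives $1+\delta_{i+1}\ge(1+\delta_i)(1+\delta_ip)$ up to concentration errors. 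Without uniformization the guaranteed ratio can even drop, because low-degree ordinary vertices are rarely covered. An edge at a maximum-degree ordinary vertex may then be lost only through its colour vertex, and the ratio bound is multiplied by $(1-p)^r/(1-(1+\delta_i)p)<1$. With uniformization, $1+\delta_i$ grows geometrically and exceeds $r^2$ while $D_i$ is still about $D^{1-O(\alpha)}$, where concentration is strong. At that point Aharoni--Haxell finishes. For any set $I$ of surviving colours, $\bigcup_{c\in I}E_c$ has at least $r^2D_i|I|$ edges, and each edge meets at most $rD_i$ of them. So a greedy matching has at least $r|I|>r(|I|-1)$ edges. The Lov\'asz Local Lemma would also work here.
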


The theorem permits parallel edges and a non-proper coloring.  Both features
are convenient, because later two different choices may lead to the same
underlying translate.

We shall use two standard Chernoff bounds.  If $X$ is a sum of independent
Bernoulli random variables with mean $\mu$, then for every $t\ge0$,
\begin{equation}\label{eq:chernoff-additive}
        \Prob(X\ge\mu+t)
        \le \exp\!\left(-\frac{t^2}{2(\mu+t/3)}\right).
\end{equation}
Also, whenever $A\ge e\mu$,
\begin{equation}\label{eq:chernoff-large}
        \Prob(X\ge A)\le\left(\frac{e\mu}{A}\right)^A .
\end{equation}
These forms remain true for non-identically distributed Bernoulli variables;
see, for example, \cite{AlonSpencer}.  We also use the LYM inequality
\cite{Lubell}: if $\mathcal A$ is an antichain in the Boolean lattice
$2^{[r]}$, then
\begin{equation}\label{eq:LYM}
        \sum_{S\in\mathcal A}\binom{r}{|S|}^{-1}\le1.
\end{equation}

We now turn to the fractional construction.  A partition of $L$ means a
multiset of positive integers whose sum is $L$.  Equal parts are regarded as
distinct occurrences when they are labelled or ordered.  A composition of $L$
is an ordered list of positive integers summing to $L$.

\begin{lemma}\label{lem:harmonic-partition}
For every pair of integers $L\ge k\ge1$, there exists a random partition
$\Pi_{L,k}$ of $L$ into parts of size at most $k$ such that, if $N_d$ denotes
the number of parts of size $d$, then $\E N_d=L/(kd)$ for every $d\in[k]$.
\end{lemma}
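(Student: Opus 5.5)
The plan is to realise $\Pi_{L,k}$ as a \emph{mixture of randomly shifted grid partitions} and to choose the mixing weights by solving a triangular linear system. First a consistency check. Every partition of $L$ satisfies $\sum_d dN_d=L$, and the target values satisfy $\sum_{d=1}^k d\cdot L/(kd)=L$. So the prescribed expectations are compatible with the one linear constraint that every partition obeys, and it suffices to find a distribution over partitions hitting $k-1$ of the $k$ equations; the remaining one then holds automatically.

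The building blocks are grid partitions. For $d\in[k]$ and an offset $a\in\{0,1,\ldots,d-1\}$, cut $[L]$ immediately after position $a$ and then after every $d$ further positions. The result is an initial piece of size $a$ (absent if $a=0$), a run of full blocks of size $d$, and a final piece of size $b\in\{0,\ldots,d-1\}$, with $b\equiv L-a\pmod d$. Since $L\ge k\ge d$, all of this is well defined. If $a$ is uniform, then $a$ and $b$ are each uniform on $\{0,\ldots,d-1\}$, and the expected number of full $d$-blocks equals $(L-d+1)/d$: this is the average number of admissible starting points in a fixed residue class. The combined construction is to choose $d$ with probability $w_d$, then $a$ uniformly, and take the grid partition. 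For $j\in[k]$ this gives
\[
\E N_j \;=\; w_j\,\frac{L-j+1}{j}\;+\;\sum_{d=j+1}^{k}\frac{2w_d}{d},
\]
up to a correction I still have to check: when $a+b=d$ the two end pieces combine, and this needs care. Setting this equal to $L/(kj)$ gives an upper-triangular system, which I would solve from $j=k$ downwards. It starts with $w_k=L/(k(L-k+1))$. The identity $\sum_j j\,\E N_j=L\sum_d w_d$ then forces $\sum_d w_d=1$ automatically.

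The main obstacle is nonnegativity of the $w_j$. The recursion requires
\[
\sum_{d>j}\frac{2w_d}{d}\le\frac{L}{kj},
\]
and this genuinely fails when $L$ is close to $k$; for example, $L=k$ forces $w_k=1$. The boundary pieces are counted twice, which is too costly in that range. My first attempt would be to replace the two-sided grid by a one-sided or cyclic version, which has only one leftover piece, of size uniform in $\{0,\ldots,d-1\}$. The contribution of larger $d$ is then $w_d/d$ instead of $2w_d/d$. Alternatively, one can randomise which end carries the remainder. I would then verify the inequality using the estimate $w_d\approx 1/k\cdot L/(L-d+1)$ coming from the recursion. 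If small $L$ still resists, I would treat $k\le L<2k$ separately by an explicit mixture of partitions of the form $\{d,\,L-d\}$ together with the partition into parts of size at most $k$ obtained from a single cut. I would match the targets there by a direct computation, and use the grid mixture only for $L\ge 2k$, where the error terms are $O(1/k)$ relative to the main term and nonnegativity is clear.
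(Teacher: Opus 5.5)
\textbf{There is a genuine gap: you never prove that the weights $w_j$ are nonnegative, and everything rests on that.}

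\begin{itemize}
\item \emph{The range $L\ge 2k$.} You call nonnegativity clear because the boundary terms are ``$O(1/k)$ relative to the main term''. That is not so. For $j\asymp k$ and $L=2k$, the subtracted quantity $j\sum_{d>j}2w_d/d$ is a constant fraction of $L/k$.
\item \emph{The range $L<2k$.} This is where the paper applies the lemma, with $L=\lceil(1+\eps)k\rceil$. You leave it open, and both fallbacks fail.
\item \emph{Fallback (a) is wrong.} If a partition of $L$ consists of copies of $d$ plus one further part smaller than $d$, that part must equal $L\bmod d$. Rotating a cyclic grid, or choosing which end carries the remainder, does not change the multiset of sizes. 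So the leftover is deterministic, not uniform. The $d$-grid contributes $w_d$ to $\E N_j$ exactly when $L\bmod d=j$, not $w_d/d$.
\item \emph{Fallback (b) cannot work.} A mixture of partitions with at most two parts has at most two parts on average. But summing the targets gives $\sum_d\E N_d=(L/k)\sum_{d=1}^k 1/d$, which exceeds $2$ once $k\ge4$.
\item \emph{A minor point.} The ``$a+b=d$'' correction never arises in the linear grid, because the two end pieces lie at opposite ends of $[L]$. Your formula for $\E N_j$ is exact.
\end{itemize}

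\textbf{How to close the gap.} Your two-sided scheme fails only at $L=k$, and the missing positivity argument is short. Set $S_j=\sum_{d=j+1}^k 2w_d/d$ and $F_j=L/k-jS_j=(L-j+1)w_j$. The relation $S_{j-1}=S_j+2w_j/j$ then gives exactly
\[
F_{j-1}=\frac{L}{kj}+\frac{j-1}{j}\cdot\frac{L-j-1}{L-j+1}\,F_j,\qquad F_k=\frac{L}{k}.
\]
For $L\ge k+1$ every factor $L-j-1$ is nonnegative, so all $F_j$, and hence all $w_j$, are positive. Then $\sum_d w_d=1$ follows from your length identity. At $L=k$ one gets $F_{k-1}=(2-k)/k<0$ for $k\ge3$, so that case needs a separate construction. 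Use the cycle type of a uniform permutation of $[k]$, which has $\E N_d=1/d$ exactly.

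\textbf{Comparison with the paper.} That permutation construction is the paper's base case, and the paper's whole proof is built from it.
\begin{itemize}
\item Take a uniform permutation of $[L]$ conditioned not to be an $L$-cycle. It has $\E C_m=L/(m(L-1))$ for $m<L$.
\item Keep cycles of length at most $k$, and replace each longer $m$-cycle by an independent copy of $\Pi_{m,k}$.
\item The identity
\[
\frac{L}{d(L-1)}+\sum_{m=k+1}^{L-1}\frac{L}{m(L-1)}\cdot\frac{m}{kd}=\frac{L}{kd}
\]
closes the induction, with no linear system and no sign check.
\end{itemize}
The permutation construction also guarantees many parts, at least the number of top-level cycles, and the paper's next lemma uses this. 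A grid mixture does not: when $L=k+1$ one has $w_k=(k+1)/(2k)$, and the $k$-grid always produces exactly two parts. So that downstream step would need a different argument if you replaced the construction.
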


\begin{proof}
The construction is by induction on $L$.  If $L=k$, choose a uniformly random
permutation of $[k]$ and take its cycle lengths as the parts.  A uniform
permutation has expected number $1/d$ of cycles of length $d$, which is
$L/(kd)$ in this case.

Assume now that $L>k$, and suppose that the desired distribution has already
been defined for all smaller first arguments.  Choose a uniformly random
permutation of $[L]$, conditioned on not being a single $L$-cycle.  Each
cycle of length at most $k$ is kept as one part.  Each cycle of length
$m>k$ is replaced, independently of all other such cycles, by a sample from
$\Pi_{m,k}$.  The conditioning excludes the only cycle of length $L$, so every
recursive call has first argument smaller than $L$.

Let $C_m$ be the number of top-level cycles of length $m$.  In an
unconditioned uniform permutation of $[L]$, $\E C_m=1/m$.  For $m<L$, the
excluded event has $C_m=0$ and probability $1/L$.  Therefore, in the
conditioned distribution, $\E C_m=L/(m(L-1))$ for $m<L$.  A final part of size
$d\le k$ comes either from a top-level $d$-cycle or from the recursive
replacement of a longer cycle.  Hence the induction hypothesis gives
\[
 \E N_d
 =\frac{L}{d(L-1)}
   +\sum_{m=k+1}^{L-1}\frac{L}{m(L-1)}\cdot\frac{m}{kd}
 =\frac{L}{kd}.
\]
The induction is complete.
\end{proof}

The identity in Lemma~\ref{lem:harmonic-partition} is compatible with the
obvious conservation of length, since $\sum_{d=1}^k d\,\E N_d=L$.  We also
need a mild lower-tail estimate for the total number of parts.

\begin{lemma}\label{lem:many-parts}
There is an absolute constant $c_0>0$ such that, whenever $L>k\ge2$ and $R$
is the number of final parts in $\Pi_{L,k}$, one has
$\Prob(R<c_0\log k)=O(k^{-1/4})$.  One may take $c_0=1/(4\log2)$, and the
implied constant is absolute.
\end{lemma}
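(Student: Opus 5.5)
The plan is to bound the number $R$ of final parts from below by the number of top-level cycles, and then to apply a lower-tail Chernoff bound to the cycle count of a uniform permutation. The recursion never merges parts. In the construction of $\Pi_{L,k}$ with $L>k$, every top-level cycle of the conditioned permutation of $[L]$ contributes at least one final part. A short cycle is kept as it is, and a long cycle of length $m$ is replaced by a sample of $\Pi_{m,k}$, which has at least one part. Hence $R\ge C$, where $C$ is the total number of cycles of a uniform permutation $\sigma$ of $[L]$ conditioned on not being an $L$-cycle. It therefore suffices to show $\Prob(C<c_0\log k)=O(k^{-1/4})$.

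For the unconditioned permutation I will use the Feller coupling (the Chinese restaurant construction). The number of cycles $C'$ of a uniform permutation of $[L]$ has the law of $\sum_{i=1}^L\xi_i$, where the $\xi_i$ are independent Bernoulli variables with $\Prob(\xi_i=1)=1/i$. So $\mu:=\E C'=H_L\ge H_k\ge\log k$. The conditioning event $\{C'\ge2\}$ has probability $1-1/L\ge1/2$. Therefore, for every event $E$,
\[
\Prob(C\in E)\le\frac{L}{L-1}\,\Prob(C'\in E)\le 2\,\Prob(C'\in E),
\]
and it is enough to bound $\Prob(C'<c_0\log k)$.

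The remaining step is the lower-tail estimate. The paper recorded only upper-tail Chernoff bounds, so I will invoke the standard multiplicative lower-tail form: for a sum of independent Bernoulli variables with mean $\mu$ and $0<a<1$,
\[
\Prob(X\le a\mu)\le\exp\bigl(-\mu(1-a+a\log a)\bigr).
\]
Write $c_0\log k=a\mu$ with $a=c_0\log k/H_L\le c_0=1/(4\log 2)\approx0.361$. The function $a\mapsto 1-a+a\log a$ is decreasing on $(0,1)$. Its value at $a=0.361$ is about $0.639-0.367=0.272>1/4$. Since $\mu\ge\log k$, this gives $\Prob(C'<c_0\log k)\le k^{-0.27}$. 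Multiplying by the factor $2$ from the conditioning yields $O(k^{-1/4})$ with an absolute constant.

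I expect the only delicate points to be bookkeeping. First, one must check that the exponent $1-a+a\log a$ really exceeds $1/4$ at $a=c_0$. This is a one-line numerical check, and it is where the specific value $c_0=1/(4\log2)$ enters. Second, one must handle $\mu=H_L$ growing with $L$. Larger $\mu$ only helps, because $a$ decreases and $\mu$ increases. If the numerical margin at $c_0$ turned out to be insufficient, the fallback would be to exploit the recursion as well, by also counting the cycles of the recursive samples. The crude bound $R\ge C$ already appears to suffice.
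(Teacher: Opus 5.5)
Your proposal is correct and is essentially the paper's argument: the same reduction $R\ge C$ to top-level cycles, the same factor-$2$ loss from the conditioning, and an exponential-moment bound on the cycle count. The only real difference is how that bound is obtained. The paper evaluates $\E 2^{-C}=\binom{2L}{L}4^{-L}\le L^{-1/2}$ exactly from the generating function and applies Markov at $z=1/2$, which is where $c_0=1/(4\log 2)$ comes from, since $2^{c_0\log k}=k^{1/4}$. You instead factor the same generating function via the Feller coupling and quote the optimized lower-tail Chernoff bound, which costs a numerical check but gives the slightly better exponent $\approx 0.27$.
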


\begin{proof}
Let $C$ be the number of cycles in the top-level permutation.  Every top-level
cycle produces at least one final part, so $R\ge C$.  For an unconditioned
uniform permutation $\sigma\in S_L$, the cycle-count generating function is
$\E z^{C(\sigma)}=z(z+1)\cdots(z+L-1)/L!$.  Putting $z=1/2$ gives $\E 2^{-C(\sigma)}=\binom{2L}{L}/4^L\le L^{-1/2}$.
Conditioning on the event that $\sigma$ is not a single $L$-cycle changes this
expectation by a factor at most $1/(1-1/L)\le2$.  Thus, in the conditioned
distribution, $\E 2^{-C}\le2L^{-1/2}$.  With $c_0=1/(4\log2)$, Markov's
inequality gives
\[
 \Prob(C<c_0\log k)
 \le 2^{c_0\log k}\E 2^{-C}
 \le 2k^{1/4}L^{-1/2}
 \le 2k^{-1/4},
\]
since $L\ge k$.  The same estimate holds for $R$, because $R\ge C$.
\end{proof}

The next lemma explains why a random ordering of many parts spreads each
individual part size rather evenly.

\begin{lemma}\label{lem:start-position}
Let $b_1,\ldots,b_R$ be labelled positive integers with sum $L$, and place the
corresponding blocks in a uniformly random order along $[L]$.  Fix a block
$i$.  The probability that this block starts at any prescribed point is at
most $1/R$.  If $b_i=d$, then the probability that it covers any prescribed
point of $[L]$ is at most $d/R$.
\end{lemma}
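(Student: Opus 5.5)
The plan is to prove the first assertion with the LYM inequality~\eqref{eq:LYM}, and then derive the second from the first by a union bound over start points. Fix the block $i$ and a point $p\in[L]$. Block $i$ starts at $p$ exactly when the set $S$ of blocks placed before it satisfies $\sum_{j\in S}b_j=p-1$. I will therefore work in the Boolean lattice $2^{[R]\setminus\{i\}}$, which is isomorphic to $2^{[R-1]}$. Let $\mathcal A_p$ be the family of subsets $S\subseteq[R]\setminus\{i\}$ with $\sum_{j\in S}b_j=p-1$. Since all $b_j$ are positive, a proper subset has strictly smaller sum. Hence $\mathcal A_p$ is an antichain.

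Next I compute the probability of a prescribed predecessor set $S$ with $|S|=s$. In a uniformly random order, the position of block $i$ is uniform on $[R]$, so it equals $s+1$ with probability $1/R$. Conditional on that, the set of its $s$ predecessors is a uniformly random $s$-subset of the other $R-1$ blocks. So
\[
 \Prob(\text{predecessor set of $i$ is }S)=\frac1R\binom{R-1}{|S|}^{-1}.
\]
Summing over $S\in\mathcal A_p$ and applying~\eqref{eq:LYM} with $r=R-1$ gives
\[
 \Prob(\text{block $i$ starts at $p$})=\frac1R\sum_{S\in\mathcal A_p}\binom{R-1}{|S|}^{-1}\le\frac1R .
\]

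For the covering statement, let $b_i=d$. Block $i$ covers a point $q$ only if its start lies in $\{q-d+1,\dots,q\}$, a set of at most $d$ points. A union bound over these start points, together with the first assertion, gives probability at most $d/R$.

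There is no real obstacle. The only point needing care is the observation that the predecessor sets with a fixed sum form an antichain, and this holds because every block has positive length. The case $R=1$ is trivial, with the convention that $2^{\emptyset}$ has the single element $\emptyset$.
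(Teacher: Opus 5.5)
Your proposal is correct and follows the paper's proof: the paper also takes the predecessor sets with sum $p-1$ as an antichain in $2^{[R]\setminus\{i\}}$, bounds the probability by $\frac{1}{R}\sum_{S}\binom{R-1}{|S|}^{-1}\le \frac1R$ using LYM, and then takes a union bound over the at most $d$ possible start points. The only difference is that the paper computes the probability as $|S|!(R-1-|S|)!/R!$ directly, while you get the same value by conditioning on the position of block $i$.
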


\begin{proof}
Fix a possible starting point $s$ and put $t=s-1$.  A set
$S\subseteq[R]\setminus\{i\}$ can be exactly the set of blocks placed before
block $i$ only if $\sum_{j\in S}b_j=t$.  Since all $b_j$ are positive, all such
sets $S$ form an antichain.  For a fixed such $S$, the probability that
precisely the blocks in $S$ precede block $i$ is
$|S|!(R-1-|S|)!/R!$, or equivalently $1/(R\binom{R-1}{|S|})$.  Summing this
quantity over the antichain and applying \eqref{eq:LYM} gives the bound
$1/R$.  A block of length $d$ can cover a fixed point from at most $d$ starting
positions, and the second assertion follows by a union bound.
\end{proof}

For $d\in[k]$ and $1\le s\le L-d+1$, write $I_{d,s}=[s,s+d-1]$.

\begin{proposition}\label{prop:fractional-interval}
There is an absolute constant $C>0$ such that, whenever $k<L\le2k$ and $k$ is
sufficiently large, there are nonnegative weights $q_{d,s}$ satisfying
\begin{align}
 \sum_{s=1}^{L-d+1}q_{d,s}&=\frac1d
        &&(d\in[k]),\label{eq:q-color}\\
 \sum_{d=1}^k\ \sum_{s:\,x\in I_{d,s}}q_{d,s}&=\frac{k}{L}
        &&(x\in[L]),\label{eq:q-load}\\
 \sum_{s:\,x\in I_{d,s}}q_{d,s}&\le\frac{C}{\log k}
        &&((d,x)\in[k]\times[L]).\label{eq:q-diffuse}
\end{align}
\end{proposition}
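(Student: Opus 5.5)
Take $\Pi_{L,k}$ from Lemma~\ref{lem:harmonic-partition}, order its parts uniformly at random along $[L]$, and read off the weights as normalized expectations of block positions. Concretely, each realization gives a tiling of $[L]$ by intervals $I_{d,s}$, one per part. Put
\[
 p_{d,s}=\Prob\bigl(\text{some part of size $d$ occupies exactly } I_{d,s}\bigr),
\]
more precisely the expected number of such parts, which equals this probability since the blocks are disjoint. Then define $q_{d,s}=(k/L)\,p_{d,s}$.

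The first two conditions follow from linearity of expectation.
\begin{itemize}
\item For \eqref{eq:q-color}: $\sum_s p_{d,s}=\E N_d=L/(kd)$, so $\sum_s q_{d,s}=1/d$.
\item For \eqref{eq:q-load}: every point $x$ is covered by exactly one block in every realization, so $\sum_d\sum_{s:x\in I_{d,s}}p_{d,s}=1$, and multiplying by $k/L$ gives $k/L$.
\end{itemize}
These are exact identities, with no error terms.

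The real work is \eqref{eq:q-diffuse}. The quantity $\sum_{s:x\in I_{d,s}}p_{d,s}$ is the expected number of $d$-parts covering $x$. Since parts are disjoint, this is the probability that some $d$-part covers $x$. Condition on the multiset of parts, with $R$ parts in total, and let $N_d$ be the number of parts of size $d$. By Lemma~\ref{lem:start-position} and a union bound over the $N_d$ blocks of size $d$, the conditional probability is at most $\min(1,dN_d/R)$. Note that $dN_d\le L\le 2k$.

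Split according to whether $R<c_0\log k$. Lemma~\ref{lem:many-parts} applies because $L>k\ge2$. On the good event $R\ge c_0\log k$, the bound is $dN_d/R\le dN_d/(c_0\log k)$, so that part contributes at most $d\,\E N_d/(c_0\log k)=L/(k c_0\log k)\le 2/(c_0\log k)$. The bad event contributes at most its probability, $O(k^{-1/4})$, which is $o(1/\log k)$. Altogether the probability is at most $C'/\log k$ for large $k$. Multiplying by $k/L<1$ gives \eqref{eq:q-diffuse} with an absolute constant $C$ (for instance $C=2/c_0+1$).

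I expect the main obstacle to be the step "$\E[dN_d/R]\le d\,\E N_d/(c_0\log k)$". $R$ and $N_d$ are dependent, so we cannot simply divide expectations. Restricting to the event $\{R\ge c_0\log k\}$ avoids the issue: there $1/R\le 1/(c_0\log k)$ pointwise, so $\E[dN_d/R\cdot\mathbf 1_{R\ge c_0\log k}]\le \E[dN_d]/(c_0\log k)$. The only other point to check is that Lemma~\ref{lem:start-position} applies conditionally on the multiset of parts, with labelled equal parts. It does, since the ordering is uniform and independent of the partition.
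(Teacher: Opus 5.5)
Your proposal is correct and follows the paper's proof essentially step for step. You use the harmonic partition of Lemma~\ref{lem:harmonic-partition}, put its parts in a uniformly random order, and set $q_{d,s}=(k/L)\Prob(I_{d,s}\text{ is a block})$; the first two identities then hold exactly by linearity. For \eqref{eq:q-diffuse} you apply Lemma~\ref{lem:start-position} conditionally on the partition and split on $R\ge c_0\log k$ using Lemma~\ref{lem:many-parts}, which is exactly the paper's argument, with the dependence between $N_d$ and $R$ handled the same way.
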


\begin{proof}
Choose a partition from Lemma~\ref{lem:harmonic-partition}, label all part
occurrences, and then put the labelled parts in a uniformly random order.  This
produces a random composition of $L$, or equivalently a random decomposition of
$[L]$ into consecutive blocks.

Let $Q_{d,s}$ be the probability that $I_{d,s}$ is one of these blocks, and set
$q_{d,s}=(k/L)Q_{d,s}$.  Summing over $s$ counts the expected number of blocks
of length $d$, so Lemma~\ref{lem:harmonic-partition} gives
$\sum_s q_{d,s}=(k/L)\E N_d=1/d$, which proves \eqref{eq:q-color}.  In every
realization each point $x\in[L]$ lies in exactly one block.  Taking
expectations and multiplying by $k/L$ gives \eqref{eq:q-load}.

It remains to prove \eqref{eq:q-diffuse}.  Fix $d$ and $x$.  Conditional on
the unordered labelled partition, Lemma~\ref{lem:start-position}, applied to
each labelled $d$-part, gives the conditional bound
$\Prob(x\hbox{ lies in a $d$-block}\mid\Pi_{L,k})\le dN_d/R$.  On the event
$R\ge c_0\log k$ this is at most $dN_d/(c_0\log k)$, while outside that event
we use the trivial bound $1$.  Hence Lemmas~\ref{lem:harmonic-partition} and
\ref{lem:many-parts} imply
\[
 \Prob(x\hbox{ lies in a $d$-block})
 \le \frac{d\E N_d}{c_0\log k}+\Prob(R<c_0\log k)
 =\frac{L}{kc_0\log k}+O(k^{-1/4})
 \le \frac{C}{\log k},
\]
where $L\le2k$ and $C$ is enlarged if necessary.  Multiplying by $k/L$ gives
\eqref{eq:q-diffuse}.
\end{proof}

The three identities have distinct roles.  Formula \eqref{eq:q-color} gives
intervals of length $d$ total mass $1/d$.  Formula \eqref{eq:q-load} says that
every point of $[L]$ carries exactly the same total mass, namely $k/L$.
Formula \eqref{eq:q-diffuse} says that the contribution of any one fixed
length $d$ at a single point is small.  The first two statements are exact;
only the third is asymptotic.

\section{From intervals to arithmetic progressions}

We now convert the interval weights into weights on translates of the sets
$B_d$.  Fix $n\in\N$, let $k<L\le2k$, and take the weights from
Proposition~\ref{prop:fractional-interval}.  Think of every point of $[L]$ as
being replaced by a block of $n$ consecutive integer points, so that the new
ambient interval is $[nL]$.

For $d\in[k]$, $1\le s\le L-d+1$, and $a\in[d]$, define
\begin{equation}\label{eq:edge-definition}
        E_{d,s,a}=\{n(s-1)+a+jd:0\le j<n\}.
\end{equation}
This is a translate of $B_d$, because $E_{d,s,a}=(n(s-1)+a-d)+B_d$.  For fixed
$d$ and $s$, the $d$ sets in \eqref{eq:edge-definition} are pairwise disjoint
and partition a consecutive interval:
\begin{equation}\label{eq:bundle-partition}
        \bigsqcup_{a=1}^d E_{d,s,a}
        =[n(s-1)+1,\,n(s+d-1)].
\end{equation}
Indeed, after subtracting $n(s-1)$, the left-hand side is
$\{a+jd:1\le a\le d,\ 0\le j<n\}=[nd]$.

For each $d\in[k]$, let $\Omega_d=\{(s,a):1\le s\le L-d+1,\ a\in[d]\}$.  If
$\omega=(s,a)\in\Omega_d$, write $E_d(\omega)=E_{d,s,a}$ and set
$\nu_d(\omega)=q_{d,s}$.  Different labels may occasionally determine the same
underlying subset of $[nL]$; this causes no difficulty, since the labels are
only used to describe the probability distribution.

\begin{proposition}\label{prop:fractional-AP}
For every $d\in[k]$, the function $\nu_d$ is a probability distribution on
$\Omega_d$, and every $E_d(\omega)$ is a translate of $B_d$ contained in
$[nL]$.  If $p_d(v)=\Prob_{\omega\sim\nu_d}(v\in E_d(\omega))$, then for every
$v\in[nL]$,
\begin{align}
        \sum_{d=1}^k p_d(v)&=\frac{k}{L},\label{eq:actual-load}\\
        p_d(v)&\le\frac{C}{\log k}\qquad(d\in[k]).\label{eq:actual-diffuse}
\end{align}
Moreover, for any two distinct vertices $u,v\in[nL]$, at most $n-1$ values of
$d\in[k]$ can have positive probability of producing an edge containing both
$u$ and $v$.
\end{proposition}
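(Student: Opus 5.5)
The plan is to verify each assertion of Proposition~\ref{prop:fractional-AP} directly from the definitions, using \eqref{eq:bundle-partition} to transfer the interval weights of Proposition~\ref{prop:fractional-interval} to the progressions. Every step is elementary; the only real point is the identification of $p_d(v)$ with an interval load.

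\emph{Probability distribution and containment.} The weights $\nu_d(\omega)=q_{d,s}$ are nonnegative. Each $s$ appears with exactly $d$ labels $a\in[d]$, so
\[
\sum_{\omega\in\Omega_d}\nu_d(\omega)=d\sum_s q_{d,s}=1
\]
by \eqref{eq:q-color}. We already observed after \eqref{eq:edge-definition} that $E_{d,s,a}$ is a translate of $B_d$. For containment in $[nL]$, \eqref{eq:bundle-partition} gives
\[
E_{d,s,a}\subseteq[n(s-1)+1,\,n(s+d-1)],
\]
and $s\le L-d+1$ yields $n(s+d-1)\le nL$.

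\emph{Load identities.} Fix $v\in[nL]$ and put $x=\lceil v/n\rceil\in[L]$, so that $v$ lies in the $x$-th block of $n$ consecutive points. By \eqref{eq:bundle-partition}, for fixed $d$ and $s$ the sets $E_{d,s,a}$ with $a\in[d]$ partition $[n(s-1)+1,n(s+d-1)]$. Hence the number of $a$ with $v\in E_{d,s,a}$ is $1$ if $v$ lies in that interval and $0$ otherwise. The condition $n(s-1)+1\le v\le n(s+d-1)$ is equivalent to $s\le x\le s+d-1$, that is, to $x\in I_{d,s}$. Therefore
\[
p_d(v)=\sum_{s:\,x\in I_{d,s}}q_{d,s}.
\]
Summing over $d$ and applying \eqref{eq:q-load} gives \eqref{eq:actual-load}, and \eqref{eq:q-diffuse} gives \eqref{eq:actual-diffuse} directly.

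\emph{Codegree claim.} Suppose $u<v$ both lie in some $E_{d,s,a}$. Then $v-u=jd$ for some $j\in[n-1]$, since the elements of $E_{d,s,a}$ have the form $n(s-1)+a+id$ with $0\le i<n$. Thus $d=(v-u)/j$ for some $j\in[n-1]$, so at most $n-1$ values of $d$ are possible, regardless of which labels carry positive weight.

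I expect no genuine obstacle here. The step needing the most care is the load computation: one must check that exactly one label $a$ per admissible $s$ covers $v$, and that the $s$-range matches $x\in I_{d,s}$ exactly at the block boundaries.
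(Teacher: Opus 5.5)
Your proposal is correct and follows the paper's proof essentially step by step. It normalizes the mass via \eqref{eq:q-color}, identifies $p_d(v)=\sum_{s:\,x\in I_{d,s}}q_{d,s}$ through the partition identity \eqref{eq:bundle-partition}, and bounds the codegree using $|u-v|=jd$ with $j\in[n-1]$; you also spell out the block-boundary equivalence and the containment inequality $n(s+d-1)\le nL$, which the paper leaves implicit.
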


\begin{proof}
For a fixed difference $d$, the total mass on $\Omega_d$ is
\[
        \sum_{(s,a)\in\Omega_d}\nu_d(s,a)
        =\sum_{s=1}^{L-d+1}\sum_{a=1}^d q_{d,s}
        =d\sum_s q_{d,s}=1,
\]
by \eqref{eq:q-color}.  Thus $\nu_d$ is a probability distribution.  The
translate property follows from \eqref{eq:edge-definition}, and containment in
$[nL]$ follows from \eqref{eq:bundle-partition}.

Fix $v\in[nL]$, and let $x=\lceil v/n\rceil\in[L]$.  For fixed $d$ and $s$,
the partition identity \eqref{eq:bundle-partition} shows that exactly one of
the $d$ possible values of $a$ gives a set containing $v$ if $x\in I_{d,s}$,
and none does if $x\notin I_{d,s}$.  Hence $p_d(v)=\sum_{s:\,x\in I_{d,s}}q_{d,s}$.
The two displayed conclusions now follow from \eqref{eq:q-load} and
\eqref{eq:q-diffuse}.

Finally, suppose that a translate of $B_d$ contains two distinct vertices $u$
and $v$.  Two terms of an $n$-term progression with common difference $d$ then
differ by $jd$ for some $j\in\{1,\ldots,n-1\}$; in other words,
$|u-v|=jd$.  For each fixed $j$, this equation determines at most one integer
$d$.  Hence at most $n-1$ differences can occur.
\end{proof}

The identity \eqref{eq:actual-load} also has the correct total size.  Summing
it over all $nL$ vertices gives
$\sum_{v\in[nL]}\sum_{d=1}^k p_d(v)=nL\cdot k/L=nk$, which is exactly the
expected total number of vertices obtained by choosing one $n$-term progression
for each of the $k$ differences.

\section{Random choice of translates and the proof of the theorem}

We now choose finitely many possible translates for each difference.  The
following lemma says that, for large $k$, the choices may be made so that the
resulting colored hypergraph satisfies the hypotheses of Theorem~\ref{thm:DP}.
The exponents $4$ and $7/8$ have no special significance; they simply leave a
comfortable gap between the expected local degrees and the allowed threshold.

\begin{lemma}\label{lem:sampling}
Fix $n\ge2$ and $\eps\in(0,1)$.  For all sufficiently large $k$, there is an
$n$-uniform edge-colored multi-hypergraph $H$ on $V(H)=[nL]$, where
$L=\lceil(1+\eps)k\rceil$, with color set $[k]$, such that, if
$\eta=\eps/(1+\eps)$, $T=\lceil\log^4 k\rceil$, and
$D=\lceil(1-\eta/2)T\rceil$, then the following hold:
\begin{enumerate}[label=\textup{(\roman*)},leftmargin=2.4em]
\item every color class has exactly $T$ edges;
\item $\Delta(H)\le D$;
\item every pair of distinct vertices has codegree at most $D^{7/8}$;
\item $\deg_d(v)\le D^{7/8}$ for every $v\in V(H)$ and every $d\in[k]$.
\end{enumerate}
Every edge of color $d$ is a translate of $B_d$ contained in $[nL]$.
\end{lemma}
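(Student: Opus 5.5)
We sample the edges independently. With $L=\lceil(1+\eps)k\rceil$ we have $k<L\le 2k$ for large $k$, so Proposition~\ref{prop:fractional-AP} applies. For each color $d\in[k]$, draw $T$ independent samples $\omega_{d,1},\ldots,\omega_{d,T}$ from $\nu_d$, independently over $d$. Let $H$ have the $T$ edges $E_d(\omega_{d,i})$ of color $d$, kept as parallel edges when they coincide. Then (i) holds by construction. Each edge is a translate of $B_d$ inside $[nL]$ with exactly $n$ elements, so $H$ is $n$-uniform. It remains to show that (ii)--(iv) hold simultaneously with positive probability. We do this by a union bound over the $nL=O_{n}(k)$ vertices, the $O_n(k^2)$ pairs, and the $O_n(k^2)$ vertex--color pairs.

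For (ii), fix $v$. Then $\deg_H(v)=\sum_{d,i}\mathbf 1[v\in E_d(\omega_{d,i})]$ is a sum of independent Bernoulli variables. By \eqref{eq:actual-load} its mean is
\[
\mu=T\sum_d p_d(v)=Tk/L\le T/(1+\eps)=(1-\eta)T.
\]
Apply \eqref{eq:chernoff-additive} with $t=\eta T/2-1$, say. Since $\mu\le T$, the failure probability is $\exp(-\Omega_\eps(T))=\exp(-\Omega_\eps(\log^4 k))$. This is far smaller than $k^{-3}$, so $\deg_H(v)\le (1-\eta/2)T\le D$ for all $v$ with high probability.

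For (iv), fix $v$ and $d$. Then $\deg_d(v)$ is binomial with $T$ trials and success probability $p_d(v)\le C/\log k$ by \eqref{eq:actual-diffuse}, so its mean is at most $CT/\log k=O(\log^3 k)$. Take $A=D^{7/8}=\Theta(\log^{7/2}k)$, which exceeds $e\mu$ for large $k$. Then \eqref{eq:chernoff-large} gives failure probability at most $(O(\log^{-1/2}k))^{A}=\exp(-\Omega(\log^{7/2}k\log\log k))$, again superpolynomially small.

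For (iii), fix distinct $u,v$. The last assertion of Proposition~\ref{prop:fractional-AP} says that only a set $J$ of at most $n-1$ colors can produce an edge containing both $u$ and $v$. The codegree is therefore at most $\sum_{d\in J}\deg_d(u)$. On the event of (iv), with the threshold lowered by a constant factor, this is at most $(n-1)\cdot D^{7/8}/n$. Concretely, I would run the (iv) argument with threshold $D^{7/8}/n$. This still exceeds $e\mu$ for large $k$, so the same bound holds, and then (iii) follows deterministically. Alternatively, one can bound the codegree directly as a Bernoulli sum with mean $O_n(T/\log k)$ and use \eqref{eq:chernoff-large} again.

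The main obstacle is bookkeeping rather than depth. The threshold $D^{7/8}$ must dominate the expected color-degree $T\cdot C/\log k$ by a growing factor, and this is exactly why $T$ is taken to be $\log^4 k$. The diffuseness bound \eqref{eq:actual-diffuse} saves only a factor $\log k$, so $T$ must be polylogarithmic with exponent large enough that $T^{7/8}\gg T/\log k$, i.e. $T\ll\log^{8}k$, while still having $T\gg\log k$ so that the Chernoff tails beat the polynomial union bound. With $T=\lceil\log^4 k\rceil$ both requirements hold comfortably, and the union bound over $O_n(k^2)$ events finishes the proof.
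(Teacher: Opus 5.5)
Your proposal is correct and matches the paper's proof essentially step for step. It uses the same independent sampling of $T$ labels per color from $\nu_d$, the additive Chernoff bound for (ii) via $\E\deg_H(v)=Tk/L\le(1-\eta)T$, the large-deviation bound \eqref{eq:chernoff-large} exploiting the $\log^{1/2}k$ gap between $D^{7/8}$ and the $O_n(\log^3 k)$ means, and a final union bound. The only small variation is in (iii): your main route deduces the codegree bound deterministically from the strengthened bound $\deg_d(u)\le D^{7/8}/n$ together with $|J|\le n-1$, whereas the paper applies \eqref{eq:chernoff-large} directly to the codegree, which is your stated alternative; both are valid.
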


\begin{proof}
For large $k$, we have $k<L\le2k$, so Proposition~\ref{prop:fractional-AP}
applies.  For each color $d\in[k]$, independently choose $T$ labels
$\omega\in\Omega_d$ according to $\nu_d$, allowing repetitions.  Each chosen
label gives the edge $E_d(\omega)$ with color $d$.  If the same underlying set
appears more than once, it is kept with multiplicity.  This gives an
$n$-uniform edge-colored multi-hypergraph in which every color class has size
exactly $T$.

First consider vertex degrees.  For a fixed vertex $v$, all indicators for the
chosen edges are independent, and \eqref{eq:actual-load} gives
\[
        \E\deg_H(v)=T\sum_{d=1}^k p_d(v)=T\frac{k}{L}
        \le \frac{T}{1+\eps}=(1-\eta)T.
\]
For sufficiently large $k$, $\eta T/2\ge1$ and $D\le T$.  If
$\mu_v=\E\deg_H(v)$, then $D-\mu_v$ lies between $\eta T/2$ and $T$.  Applying
\eqref{eq:chernoff-additive} gives $\Prob(\deg_H(v)>D)\le\exp(-c_\eps T)$ for
some $c_\eps>0$.  Since $|V(H)|=nL=O_{n,\eps}(k)$, the probability that the
maximum-degree condition fails is $o(1)$.

Now put $\rho=C/\log k$, where $C$ is the constant from
Proposition~\ref{prop:fractional-AP}.  For fixed $d$ and $v$, the color-degree
$\deg_d(v)$ is binomial with mean at most $\rho T=O(\log^3 k)$, by
\eqref{eq:actual-diffuse}.  For distinct $u,v$, let
$r_d(u,v)=\Prob_{\omega\sim\nu_d}(\{u,v\}\subseteq E_d(\omega))$.  By
Proposition~\ref{prop:fractional-AP}, at most $n-1$ of the numbers
$r_d(u,v)$ are nonzero, and each is at most $p_d(u)\le\rho$.  Hence the
codegree of $u$ and $v$ has mean at most $(n-1)CT/\log k=O_n(\log^3 k)$.

On the other hand, $D^{7/8}=\Theta_\eps(\log^{7/2}k)$.  Thus the threshold is
larger than the relevant means by a factor of order $\log^{1/2}k$.  Let
$A=\lfloor D^{7/8}\rfloor+1$.  For every color-degree and every pair codegree
under consideration, the mean $\mu$ is at most $C_{n,\eps}\log^3k$, and so
$A\ge e\mu$ once $k$ is large.  Formula \eqref{eq:chernoff-large} gives,
uniformly over all these random variables,
\[
        \Prob(X>D^{7/8})
        \le \exp\!\left(-\Omega_{n,\eps}(\log^{7/2}k\log\log k)\right).
\]
There are only $k|V(H)|+\binom{|V(H)|}{2}=O_{n,\eps}(k^2)$ relevant
color--vertex pairs and unordered vertex pairs.  A union bound shows that the
color-degree and pair-codegree conditions fail with probability $o(1)$.  Hence
with positive probability all four stated properties hold simultaneously.
\end{proof}

\begin{proof}[Proof of Theorem~\ref{thm:main}]
If $n=1$, then every $B_d$ is a singleton, so $M_k(1)=k$.  We may assume
$n\ge2$.

Fix $\eps\in(0,1)$ and take the multi-hypergraph $H$ supplied by
Lemma~\ref{lem:sampling}.  We apply Theorem~\ref{thm:DP} with $r=n$ and
$\beta=1/8$.  Let $\alpha>0$ and $D_0$ be the corresponding constants.  Since
$D=\Theta_\eps(\log^4 k)$, the inequality $D\ge D_0$ holds for all sufficiently
large $k$.  Lemma~\ref{lem:sampling} gives $\Delta(H)\le D$, and it also gives
that every pair codegree and every color-degree is at most
$D^{7/8}=D^{1-\beta}$.

It remains to check the lower bound on the number of edges in each color
class.  Each color has exactly $T$ edges, while
$D/T=1-\eta/2+O(1/T)$.  Hence $T/D-1$ tends to $\eta/(2-\eta)>0$.  Since
$D^{-\alpha}$ tends to zero, for all sufficiently large $k$ we have
$T\ge(1+D^{-\alpha})D$.  Thus all hypotheses of Theorem~\ref{thm:DP} are
satisfied.

Theorem~\ref{thm:DP} gives a full rainbow matching in $H$.  This matching
contains one edge of every color $d\in[k]$.  The chosen edge of color $d$ is a
translate of $B_d$ contained in $[nL]$, and the chosen edges are pairwise
disjoint because they form a matching.  Therefore
$M_k(n)\le nL=n\lceil(1+\eps)k\rceil$ for all sufficiently large $k$.

Finally, $M_k(n)\le(1+\eps)nk+n$.  After division by $nk$ and passage to the
limit as $k\to\infty$, we obtain
$\limsup_{k\to\infty}M_k(n)/(nk)\le1+\eps$.  Since $\eps\in(0,1)$ was
arbitrary and \eqref{eq:lower-bound} gives the opposite lower bound, it follows
that $M_k(n)=(1+o(1))nk$.
\end{proof}

We finish with two comments.  The proof above is asymptotic in an essential
way: it leaves a fixed positive amount of unused space while the rainbow
matching is found, and removes this slack only after $k$ tends to infinity.
Thus it does not answer the exact tiling question for $n\ge3$.  It would be
very interesting to find, for each fixed $n$, an explicit function
$f_n(k)=o(k)$ such that $M_k(n)\le nk+f_n(k)$ for all sufficiently large $k$.
A more arithmetical rounding argument might also shed light on the exact pairs
$(n,k)$ for which $M_k(n)=nk$.

The construction also suggests a useful general viewpoint.  One wants a
fractional choice for every color with total vertex load bounded away from
$1$, with each single color spread thinly over the vertices, and with only
boundedly many colors capable of containing a fixed pair of vertices.  In the
present problem the harmonic partition gives the first two properties, and the
simple relation $|u-v|=jd$ gives the last one.

\medskip
\noindent {\bf Data availability.} No data were used for the research described in this article.

\end{document}